\numberwithin{equation}{section}
\definecolor{thelinkcolor}{RGB}{0,0,150}	
\crefname{equation}{}{}
\crefname{theorem}{Theorem}{Theorem}
\crefname{example}{example}{Example}
\crefname{lemma}{Lemma}{Lemma}
\crefname{proposition}{Proposition}{Proposition}
\crefname{figure}{figure}{Figure}
\crefname{table}{table}{Table}
\newcommand{\newsectionstyle}{%
	\renewcommand{\@secnumfont}{\bfseries}
	\renewcommand\section{\@startsection{section}{2}%
		\z@{0.5\linespacing\@plus0.25\linespacing}{-0.5em}%
		{\hspace{\parindent}\normalfont\bfseries}}%
	\renewcommand\subsection{\@startsection{subsection}{2}%
		\z@{0.5\linespacing\@plus0.25\linespacing}{-0.5em}%
		{\hspace{\parindent}\normalfont\bfseries}}%
}
\let\oldsection\section
\let\old@secnumfont\@secnumfont
\newcommand{\originalsectionstyle}{%
	\let\@secnumfont\old@secnumfont
	\let\section\oldsection
}
\newtheorem{theorem}{Theorem}
\newtheorem{proposition}{Proposition}
\theoremstyle{remark}
\newtheorem{remark}{Remark}
\newcommand{\maxL}{L^*}
\newcommand{\R}{\mathbb{R}}
\newcommand{\dnx}{{\rm d} x}
\newcommand{\dsigma}{{\rm d}\sigma}
\newcommand{\dmu}{{\rm d}\mu}
\newcommand{\dnu}{{\rm d}\nu}
\newcommand{\abs}[1]{\left\vert #1 \right\vert}
\newcommand{\weakstar}{weak-$\ast$}
\DeclareMathOperator{\trace}{tr}
\title[Duality of convex relaxations for constrained variational problems]
{Duality of convex relaxations for\\constrained variational problems}
\date{\today}
\author[G. Fantuzzi]{Giovanni Fantuzzi}
\address{Department of Aeronautics, Imperial College London, SW7 2AZ, London, United Kingdom}
\email{\href{mailto:giovanni.fantuzzi10@imperial.ac.uk}{giovanni.fantuzzi10@imperial.ac.uk}}
\begin{document}
	
\begin{abstract}
	We prove weak duality between two recent convex relaxation methods for bounding the optimal value of a constrained variational problem in which the objective is an integral functional. The first approach, proposed by Valmorbida \textit{et al.}~[\href{https://doi.org/10.1109/TAC.2015.2479135}{IEEE Trans. Automat. Control 61(6):1649--1654, 2016}], replaces the variational problem with a convex program over sufficiently smooth functions, subject to pointwise non-negativity constraints. The second approach, discussed by Korda \textit{et al.} [\href{http://arxiv.org/abs/1804.07565}{arXiv:1804.07565}], relaxes the variational problem into a convex program over scaled probability measures. We also prove that the duality between these infinite-dimensional convex programs is strong, meaning that their optimal values coincide, when the range and gradients of admissible functions in the variational problem are constrained to bounded sets. For variational problems with polynomial data, the optimal values of each convex relaxation can be approximated by solving weakly dual hierarchies of finite-dimensional semidefinite programs (SDPs). These are strongly dual under standard constraint qualification conditions irrespective of whether strong duality holds at the infinite-dimensional level. Thus, the two relaxation approaches are equivalent for the purposes of computations.
\end{abstract}

\maketitle

\section{Introduction}
\label{s:intro}

Constrained variational problems arise in a large number of fields, including nonlinear elasticity, fluid mechanics, and control theory. In this paper, we consider a general class of variational problems relevant to all these application domains: minimize an integral functional,
\begin{equation}
L[u] := \int_X l(x,u,Du) \,\dnx
\end{equation}
where $l \in C(\overline{X}\times\R^n\times\R^{m\times n})$ is given, over all functions $u$ that belong to the set
\begin{align}
\label{e:U-def}
&&&&&&&&\mathcal{U} := \big\{
u \in W^{1,\infty}(X;\,\R^m):\;
\begingroup\textstyle\int_X\endgroup f(x,u,Du)\,\dnx &= 0,&& &&&&&&&&\\[-0.5ex]
&&&&&&&&g(x,u,Du) &= 0 \text{~~~a.e. on~}X,\nonumber\\[-0.5ex]
&&&&&&&&h_1(x,u) &= 0 \text{~~~a.e. on~}\partial X_1,\nonumber\\[-1ex]
&&&&&&&& &\;\;\vdots\nonumber\\[-0.5ex]
&&&&&&&&h_s(x,u) &= 0 \text{~~~a.e. on~}\partial X_s,\nonumber\\[-0.5ex]
&&&&&&&&u \in Y &\subseteq \mathbb{R}^m \text{~~~a.e. on~}X,\nonumber\\[-0.65ex]
&&&&&&&&Du \in Z &\subseteq \mathbb{R}^{m\times n}\text{~a.e. on~}X
\big\}.\nonumber
\end{align}
In these expressions and throughout this work $X \subset \mathbb{R}^n$ is an open bounded domain, whose boundary $\partial X = \overline{X} \setminus X$ is Lipschitz and consists of $s$ smooth surfaces $\partial X_1,\,\ldots,\,\partial X_s$ that are disjoint up to a set of zero surface measure. The Sobolev space $W^{1,\infty}(X;\,\R^m)$ consists of all weakly differentiable and essentially bounded functions $u:X \to \R^m$ whose Jacobian matrix $Du=(\partial_{x_j}u_i)_{i=1,\,\ldots,\,m}^{j=1,\,\ldots,\,n}$ is also essentially bounded. The functions $f,g\in C(\overline{X}\times\R^n\times\R^{m\times n})$ and $h_1 \in C(\partial{X}_1\times\R^n),\,\ldots,\,h_s \in C(\partial{X}_s\times\R^n)$ define integral, differential and boundary constraint on $u$. The sets $Y$ and $Z$, to which the range and gradients of admissible functions are restricted, may or may not be bounded and may coincide with the full spaces $\R^m$ and $\R^{m\times n}$. We assume that $\mathcal{U}$ is nonempty and $L[u]$ is bounded below on $\mathcal{U}$, but not that a minimizer exists. 

Minimizing $L[u]$ over the set $\mathcal{U}$ analytically is often impossible. The minimum value
\begin{equation}
\label{e:L-star}
\maxL := \inf_{u \in \mathcal{U}} L[u]
\end{equation}
is typically approximated numerically either by computing a minimizing sequence using direct optimization techniques, or by discretizing and solving the Euler--Lagrange partial differential equations (PDEs). Unless~\cref{e:L-star} is a convex minimization problem, meaning that $L[u]$ is a convex functional and $\mathcal{U}$ is a convex set, 
such methods return approximate \textit{local} minimizers with no way of checking whether they are globally optimal. In general, therefore, one only obtains an upper bound on $\maxL$. This works investigates complementary techniques that bound $\maxL$ from below.

Two such approaches have been proposed recently, and are particularly interesting because the search for lower bounds on $\maxL$ is posed as a convex optimization problem even when~\cref{e:L-star} is nonconvex. The first approach is to rewrite~\cref{e:L-star} as
\begin{equation}
\label{e:integral-inequality}
\maxL = \sup \left\{b \in \R:\;  \int_{X} l(x,u,Du) \,\dnx \geq b \quad \forall u \in \mathcal{U} \right\},
\end{equation}
augment the integral inequality using Lagrange multipliers to enforce the constraints that define $\mathcal{U}$ and the differential relation between $u$ and $Du$, and replace the augmented inequality with stronger pointwise inequalities on $\overline{X}\times\overline{Y}\times\overline{Z}$ and $\partial{X}\times\overline{Y}$. What results is an infinite-dimensional convex maximization problem  with inequality constraints for $b$ and the Lagrange multipliers. The constraints are polynomial inequalities on semialgebraic sets when the sets $X$, $Y$, $Z$ are semialgebraic, the functions $f$, $g$, $h_1,\,\ldots,\,h_s$, and $l$ are polynomial, and the Lagrange multipliers are restricted to be polynomials of fixed degree. Replacing polynomial non-negativity with sum-of-squares (SOS) conditions enables one to maximize lower bounds on $\maxL$ numerically by solving a hierarchy of semidefinite programs (SDPs), indexed by the degree of the Lagrange multipliers.
This strategy was proposed by Valmorbida \textit{et al.}~\cites{Valmorbida2014,Valmorbida2015a,Valmorbida2015b,Valmorbida2015c} and Ahmadi \textit{et al.}~\cites{Ahmadi2014,Ahmadi2015,Ahmadi2016,Ahmadi2017,Ahmadi2018} in the context of stability analysis, input-output analysis, and safety verification for dynamical systems governed by polynomial PDEs, but applies equally well to constrained variational problems.

The second strategy to bound $\maxL$ from below using convex optimization is to reformulate~\cref{e:L-star} as a minimization problem over the set of occupation and boundary measures on $\mathcal{U}$. These are the images of the Lebesgue measure on $X$ and the surface measure on $\partial{X}$ under the map $x \mapsto (x,u(x),Du(x))$ as $u$ varies in $\mathcal{U}$. The set of such measures is nonconvex in general, but relaxing the minimization to a convex superset gives an infinite-dimensional convex problem whose optimal value is a lower bound on $\maxL$. This, in turn, can be estimated from below using a standard hierarchy of moment-SDP relaxations when the sets $X$, $Y$, $Z$ are semialgebraic and $f,\,g,\,h_1,\,\ldots,\,h_s$ and $l$ are polynomials. These ideas have been applied to analyze and control linear PDEs~\cites{Bertsimas2006,Magron2018}, and have recently been extended to the nonlinear case by Korda \textit{et al.}~\cites{Korda2018}. Similar methods have also been used to approximate solutions to hyperbolic PDEs~\cite{Marx2018}.


This work demonstrates that the infinite-dimensional convex problems obtained with the two relaxation methods just described are weakly dual in the sense of convex duality. In addition, we prove that the duality is strong when $Y$ and $Z$ are bounded sets, so both methods yield the same lower bound on $\maxL$ in this case.
The two hierarchies of finite-dimensional SDPs obtained for variational problems with polynomial data are also weakly dual. 
Strong duality of the SDPs at each level of the hierarchy can be established using standard constraint qualification conditions, which often hold in practice and are independent of whether the infinite-dimensional formulations are strongly dual. Moreover, many popular algorithms for solving SDPs require strong duality to guarantee convergence and avoid poor numerical conditioning. Consequently, the relaxations of~\cref{e:L-star} proposed by~\cites{Valmorbida2014,Valmorbida2015a,Valmorbida2015b,Valmorbida2015c,Ahmadi2014,Ahmadi2015,Ahmadi2016,Ahmadi2017,Ahmadi2018} and~\cite{Korda2018} are equivalent from the point of view of numerical computations.

We present these new results in \cref{s:duality} after reviewing the methods of~\cites{Valmorbida2014,Valmorbida2015a,Valmorbida2015b,Valmorbida2015c,Ahmadi2014,Ahmadi2015,Ahmadi2016,Ahmadi2017,Ahmadi2018} and~\cite{Korda2018} in \cref{s:sos-method,s:measure-method}, respectively. While most of the material contained there has appeared in the cited literature, our discussion slightly differs from previous works and, sometimes, extends them. 
In particular, \cref{s:sos-method} gives a new description of the methods of~\cites{Valmorbida2014,Valmorbida2015a,Valmorbida2015b,Valmorbida2015c,Ahmadi2014,Ahmadi2015,Ahmadi2016,Ahmadi2017,Ahmadi2018}, which does not start with~\cref{e:integral-inequality} and makes the hitherto unrecognized connection with the techniques of~\cite{Korda2018} evident. 
Further comments are offered in \cref{s:conclusion}.

\section{Convex relaxation using Lagrange multipliers}
\label{s:sos-method}

We begin by deriving an infinite-dimensional convex program over Lagrange multipliers, largely based on ideas from~\cites{Valmorbida2014,Valmorbida2015a,Valmorbida2015b,Valmorbida2015c,Ahmadi2014,Ahmadi2015,Ahmadi2016,Ahmadi2017,Ahmadi2018}, whose feasible solutions prove lower bounds on $\maxL$. For notational simplicity we will often write $\Omega := \overline{X} \times \overline{Y} \times \overline{Z}$ and $\Gamma_i := \partial X_i \times \overline{Y}$, where $\partial X_i$ is any of the $s$ smooth components of the boundary $\partial X$. 
We will also write $\abs{X}=\int_X \dnx$ for the volume of $X$ and $\abs{\partial X_i} = \int_{\partial X_i} \dsigma$  for the surface area of each smooth portion of its boundary, where $\dsigma$ is the surface measure.

\subsection{An infinite-dimensional convex program over Lagrange multipliers}
\label{ss:sos-infinite-dimensional}

As in the classical approach to solving~\cref{e:L-star} using the calculus of variations, observe that
\begin{multline}
\label{e:inf-sup-sos}
\maxL = \inf_{\substack{u \in W^{1,\infty}(X;\,\R^m)\\u(x) \in Y \text{ a.e.}\\Du(x) \in Z \text{ a.e.}}}
\sup_{\substack{a\in\R\\\tilde{\phi}\in C(X)\\\tilde{\psi}_i \in C(\partial X_i)}}
\left\{ 
\int_X \left[l(x,u,Du) + a f(x,u,Du) + \tilde{\phi}(x) g(x,u,Du)\right] \dnx \right.\\[-4ex]
\left.  + \sum_{i=1}^s \int_{\partial X_i} \tilde{\psi}_i(x) h_i(x,u) \,\dsigma \right\},
\end{multline}
where $a$, $\tilde{\phi}$ and $\tilde{\psi}_1,\,\ldots,\,\tilde{\psi}_s$ are Lagrange multipliers for the integral, differential and boundary constraints that define $\mathcal{U}$. The spatial structure of the optimal multipliers clearly depends on $u$ and its derivatives. To enforce this without having to prescribe $u$, we let the multipliers be explicit functions of both $u$ and $Du$. Precisely, without loss of generality we consider functions $\phi \in C(\Omega)$ and $\psi_i \in C(\Gamma_i)$, $i=1,\,\ldots,\,s$, and let 
\begin{subequations}
\label{e:phi-psi-x-u-Du}
\begin{gather}
\tilde{\phi}(x) = \phi[x,u(x),Du(x)],\\
\tilde{\psi}_i(x) = \psi_i[x,u(x)], \quad i=1,\,\ldots,\,s.
\end{gather}
\end{subequations}

Another source of difficulty is the differential relation between $u$ and $Du$. To handle this, we introduce a slack function $v \in L^{\infty}(X;\,\R^{m\times n})$ and replace $Du$ by $v$, subject to the constraint $v=Du$. We can then impose the differential relation between $u$ and its derivatives using a special type of vector-valued Lagrange multiplier, without having to consider $Du$ explicitly. The next proposition, which is similar to Lemma 1.1 in~\cite{Korda2018}, makes this precise.

\begin{proposition}
	\label{th:ibp}
	Let $u \in W^{1,\infty}(X;\,\R^m)$ and $v \in L^{\infty}(X;\,\R^{m\times n})$. Then, $v=Du$ a.e. on $X$ if and only if
	\begin{equation}
	\label{e:ibp}
	\int_X \left\{ \nabla_x \cdot \theta(x,u) + \trace[ v\,D_u \theta(x,u) ] \right\} \dnx - \int_{\partial X} \theta(x,u) \cdot \hat{n}(x) \,\dsigma = 0 
	\quad \forall \theta \in C^1(\overline{X} \times \overline{Y};\,\R^n),
	\end{equation}
	where $\nabla_x \cdot \theta(x,u)$ is the divergence of $\theta$ with respect to $x$,
	$D_u\theta = (\partial_{u_j} \theta_i)_{i=1,\,\ldots,\,n}^{j=1,\,\ldots,\,m}$
	is the Jacobian of $\theta$ with respect to $u$,
	$\trace(\cdot)$ is the trace of a square matrix,
	and $\hat{n}$ is the outward unit vector normal to the boundary. 
\end{proposition}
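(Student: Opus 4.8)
I would prove the two implications separately, and I expect only the forward one to require any care. For the forward implication, suppose $v=Du$ a.e.\ and fix an admissible $\theta\in C^1(\overline{X}\times\overline{Y};\R^n)$. Since $u$ is essentially bounded, its essential range is contained in a compact subset of $\overline{Y}$ on which $\theta$ is $C^1$, so the composite field $\Theta(x):=\theta(x,u(x))$ is well defined; using a Lipschitz representative of $u$ together with Rademacher's theorem (equivalently, the chain rule for Sobolev functions, after restricting $\theta$ to that compact set) one gets $\Theta\in W^{1,\infty}(X;\R^n)$ with weak divergence
\[
\nabla_x\cdot\Theta(x) = (\nabla_x\cdot\theta)(x,u(x)) + \trace\!\big[Du(x)\,D_u\theta(x,u(x))\big]\quad\text{a.e. on }X,
\]
the trace term arising by summing $\partial_{x_k}[\theta_k(x,u(x))] = (\partial_{x_k}\theta_k)(x,u) + \sum_j (\partial_{u_j}\theta_k)(x,u)\,\partial_{x_k}u_j$ over $k$. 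Applying the Gauss--Green formula $\int_X\nabla_x\cdot\Theta\,\dnx = \int_{\partial X}\Theta\cdot\hat{n}\,\dsigma$ on the bounded Lipschitz domain $X$ and then replacing $Du$ by $v$ reproduces~\cref{e:ibp} verbatim.

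For the converse, suppose~\cref{e:ibp} holds for every admissible $\theta$; the idea is to feed it a family of test functions that isolates each scalar entry of $v-Du$. Fix indices $j\in\{1,\ldots,m\}$ and $k\in\{1,\ldots,n\}$, let $\chi\in C^\infty_c(X)$ be arbitrary, and take $\theta(x,y):=\chi(x)\,y_j\,e_k$, where $y_j$ denotes the $j$-th coordinate of $y$ and $e_k$ the $k$-th standard basis vector of $\R^n$; this clearly lies in $C^1(\overline{X}\times\overline{Y};\R^n)$. A short computation gives $(\nabla_x\cdot\theta)(x,u) = u_j(x)\,\partial_{x_k}\chi(x)$, $D_u\theta(x,u) = \chi(x)\,e_k e_j^{\tr}$ so that $\trace[v\,D_u\theta(x,u)] = \chi(x)\,v_{jk}(x)$, and $\theta(x,u)\cdot\hat{n} = 0$ on $\partial X$ because $\chi$ is compactly supported. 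Hence~\cref{e:ibp} collapses to $\int_X [\,u_j\,\partial_{x_k}\chi + v_{jk}\,\chi\,]\,\dnx = 0$; invoking the definition of the weak derivative $\partial_{x_k}u_j = (Du)_{jk}$ on the first term turns this into $\int_X \big(v_{jk}-(Du)_{jk}\big)\chi\,\dnx = 0$ for all $\chi\in C^\infty_c(X)$, whence $v_{jk} = (Du)_{jk}$ a.e.\ on $X$ by the fundamental lemma of the calculus of variations. Since $j$ and $k$ were arbitrary, $v=Du$ a.e.

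The hard part is entirely in the forward direction: one must justify that $\Theta$ is weakly differentiable with the asserted divergence and that the Gauss--Green formula applies, which hinges on the Lipschitz regularity of $\partial X$ and, when $Y$ is unbounded, on cutting $\theta$ off outside the compact essential range of $u$ before applying the chain rule. The converse requires nothing beyond the definition of a weak derivative, because the deliberate choice of a $\theta$ that is linear in $y$ and compactly supported in $x$ makes both the chain-rule contribution and the boundary integral vanish.
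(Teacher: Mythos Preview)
Your proof is correct and follows essentially the same route as the paper's. The only cosmetic difference is in the converse direction: you take $\chi\in C^\infty_c(X)$ so the boundary term in~\cref{e:ibp} vanishes outright, whereas the paper allows $\tau\in C^\infty(\overline{X})$ and lets the resulting boundary term cancel against the one produced when integrating $\int_X u_j\,\partial_{x_k}\tau\,\dnx$ by parts; either way one lands on $\int_X (v_{jk}-\partial_{x_k}u_j)\tau\,\dnx=0$ for a dense family of $\tau$, and your treatment of the forward implication is more explicit than the paper's one-line appeal to the divergence theorem.
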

\begin{proof}
	The ``only if" part follows from the divergence theorem. To prove the ``if" part, we proceed as in Lemma 1.1 of~\cite{Korda2018} and set $\theta = (u_j \tau(x) \delta_{ik})_{i=1,\,\ldots,\,n}$ in~\cref{e:ibp} for fixed $(j,k)\in\{1,\,\ldots,\,m\}\times\{1,\,\ldots,\,n\}$ and arbitrary $\tau \in C^\infty(\overline{X})$, where $\delta_{ik}$ is the usual Kronecker delta.  Upon integrating by parts the term $\nabla_x\cdot \theta = u_j\partial_{x_k} \tau$  we obtain $\int_X (v_{jk} - \partial_{x_k}u_j) \tau\, \dnx = 0$, which implies $v_{jk}=\partial_{x_k}u_j$ a.e. because $\tau$ is arbitrary. Repeating this argument for all pairs $(j,k)$ concludes the proof.
\end{proof}

Combining \Cref{th:ibp} with~\cref{e:inf-sup-sos} and~(\ref{e:phi-psi-x-u-Du}a,b) shows that
\begin{equation}
\label{e:inf-sup-sos-3}
\maxL =
\inf_{\substack{u \in W^{1,\infty}(X;\,\R^m)\\u(x) \in Y \text{ a.e.}\\v(x) \in Z \text{ a.e.}}}
\sup_{\substack{\alpha\in\R\\\phi\in C(\Omega)\\\psi_i \in C(\Gamma_i)\\\theta \in C^1(\overline{X} \times \overline{Y};\,\R^n)}}
\left\{ \int_X F(x,u,v) \,\dnx + \sum_{i=1}^{s} \int_{\partial X_i} G_i(x,u) \,\dsigma \right\},
\end{equation}
where
\begin{subequations}
	\label{e:FG}
	\begin{align}
	\label{e:F-def}
	F(x,u,v) :=\;&l(x,u,v) + a f(x,u,v) \\&+ \phi(x,u,v) g(x,u,v) + \nabla_x \cdot \theta(x,u) +  \trace\!\left[ v\,D_u \theta(x,u) \right],
	\nonumber\\[1ex]
	G_i(x,u) := \,& \psi_i(x,u) h_i(x,u) - \theta(x,u) \cdot \hat{n}(x).
	\end{align}
\end{subequations}

Solving~\eqref{e:inf-sup-sos-3} is clearly just as hard as solving the original variational problem~\eqref{e:L-star}. However, starting with~\eqref{e:inf-sup-sos-3} it is almost immediate to derive an infinite-dimensional convex program that proves a lower bound on $\maxL$. First, we exchange the inf and sup at the expense of replacing equality with a lower bound. Second, we estimate the integrals of $F$ and $G_i$ in an elementary way using the constraints $u(x) \in Y$, $v(x) \in Z$ to obtain
\begin{equation}
\label{e:sos-final}
\maxL \geq \sup_{\substack{a\in\R\\\phi\in C(\Omega)\\\psi_i \in C(\Gamma_i)\\\theta \in C^1(\overline{X} \times \overline{Y};\,\R^n)}}
\left\{ 
	\abs{X}\inf_{(x,y,z)\in \Omega} F(x,y,z)  
  + \sum_{i=1}^s \abs{\partial X_i} \inf_{(x,y)\in \Gamma_i} G_i(x,y)
\right\}
=: \mathbb{D}.
\end{equation}
This is a convex program for $a,\,\phi,\,\psi_1,\,\ldots,\,\psi_s$ and $\theta$ because $F$ and each $G_i$ depend affinely on them, so 
$(\alpha,\phi,\psi_1,\,\ldots,\,\psi_s,\theta)\mapsto \inf_{\Omega} F(x,y,z)$ and $(\alpha,\phi,\psi_1,\,\ldots,\,\psi_s,\theta)\mapsto \inf_{\Gamma_i} G_i(x,y)$ are convex functions. Convexity can be made more explicit by introducing slack variables $b$ and $c_1,\,\ldots,\,c_s$ and rewriting
\vspace{-3ex}
\begin{equation}
\label{e:sos-final-with-slacks}
\mathbb{D} = 
\sup_{\substack{a,b,c_i\in\R\\\phi\in C(\Omega)\\\psi_i \in C(\Gamma_i)\\\theta \in C^1(\overline{X} \times \overline{Y};\,\R^n)}}
\!\!\!\!\!\!\begin{aligned}
\\
\bigg\{
\abs{X} b + \sum_{i=1}^s \abs{\partial X_i} c_i:\quad
F(x,y,z) - b &\geq 0 \text{ on } \Omega,\\[-3ex]
G_i(x,y) - c_i &\geq 0 \text{ on } \Gamma_i,\quad i = 1,\,\ldots,\,s
\bigg\}.
\end{aligned}
\end{equation}

The maximization problem on the righthand side is still hard to solve even with computer assistance. However, observe that \textit{any} choice of $a,\,\phi,\,\psi_1,\,\ldots,\,\psi_s$ and $\theta$ such that $F(x,y,z)$ and $G_1(x,y),\,\ldots,\,G_s(x,y)$ are bounded below on $\Omega$ and $\Gamma_1,\,\ldots,\,\Gamma_s$ produces a lower bound on $\maxL$. This makes it possible to prove suboptimal bounds analytically. Moreover, as we discuss next, in certain cases it is possible to optimize $a,\,\phi,\,\tilde{\psi}_1,\,\ldots,\,\tilde{\psi}_s$ and $\theta$ numerically.

\begin{remark}
	To pass from~\cref{e:inf-sup-sos-3} to~\cref{e:sos-final} we have estimated $\int_{\partial X} G_i(x,u) \dsigma \geq \abs{\partial X_i} \inf_{\Gamma_i} G_i(x,y)$ for each $i=1,\,\ldots,\,s$. As already observed in~\cite{Valmorbida2015b}, we could improve these generic estimates by optimizing a lower bound on each $\int_{\partial X_i} G_i(x,u) \dsigma$ while optimizing $a$, $\phi$, $\psi_i$ and $\theta$. Indeed, minimizing $\int_{\partial X_i} G_i(x,u) \dsigma$ is a variational problem with affine dependence on $a$, $\phi$, $\psi_i$ and $\theta$ on an $(n-1)$-dimensional surface. Consequently, it can be relaxed into a convex program exactly as explained in this section if an explicit $(n-1)$-dimensional parametrization of $\partial X_i$ is available. This, in turn, requires estimates on $(n-2)$-dimensional integrals, and the procedure can be iterated until one is left with a one-dimensional problem. We do not pursue this approach here, but~\cite{Valmorbida2015b} gives a detailed discussion for square domains in $\R^2$.
\end{remark}

\subsection{Optimizing bounds by solving SDPs}
\label{ss:sos-sdp}

Let us now restrict the attention to variational problems with polynomial data. Specifically, we assume that the functions $f$, $g$, $h_i$ and $l$ are polynomials and that the sets $X$, $Y$ and $Z$ are semialgebraic, i.e., they are defined by a finite number of polynomial equations and inequalities. This means that we can find polynomials $p_1,\,\ldots,\,p_r$ such that
\begin{equation}
\label{e:Omega-semialgebraic}
\Omega = \overline{X} \times \overline{Y} \times \overline{Z} = \{(x,y,z):\, p_1(x,y,z)\geq 0,\,\ldots,\,p_r(x,y,z)\geq 0 \}.
\end{equation}
Similarly, for each smooth portion $\partial X_i$ of the boundary there exist polynomials $q_{i,1},\,\ldots,\,q_{i,t_i}$ such that 
\begin{equation}
\label{e:Gamma-semialgebraic}
\Gamma_i := \partial{X}_i \times \overline{Y} = \{(x,y):\, q_{i,1}(x,y)\geq 0,\,\ldots,\,q_{i,t_i}(x,y)\geq 0 \}.
\end{equation}
For simplicity, we assume that the outward unit vector $\hat{n}$ normal to each $\partial X_i$ is polynomial. This is true, for example, when $X$ is a polyhedral domain. 
\Cref{app:nonpolynomial-normal} shows that some cases in which $\hat{n}$ is not polynomial  can also be handled after a small modification of~\cref{e:sos-final-with-slacks}.

Given an integer $d$, let us restrict the optimization in~\cref{e:sos-final-with-slacks} to degree-$d$ polynomials $\phi \in \R_{d}[x,y,z]$ and $\,\psi_1,\,\ldots,\,\psi_s,\,\theta_1,\,\ldots,\,\theta_n \in \R_{d}[x,y]$. Then, the constraints are polynomial inequalities on semialgebraic sets and depend affinely on $a$, $b$, $c_1,\,\ldots,\,c_s$ and the (finitely many) coefficients of $\phi$, $\psi_1,\,\ldots,\,\psi_s$ and $\theta = (\theta_1,\,\ldots,\,\theta_n)$. These are NP-hard in general, but can be strengthened into tractable conditions by requiring that non-negative polynomials are representable as weighted sums of squares. More precisely, to the semialgebraic sets $\Omega$ and $\Gamma_1,\ldots\Gamma_s$ we associate the sets of polynomials
\begin{subequations}
	\begin{gather}
	Q(\Omega) := \{ w \in \R[x,y,z] :\; w = \sigma_0 + \sigma_1 p_1 + \cdots + \sigma_r p_r,\quad \sigma_0,\,\ldots,\,\sigma_r \in \Sigma[x,y,z]  \},\\
	Q(\Gamma_i) := \{ w \in \R[x,y] :\; w = \sigma_0 + \sigma_1 q_{i,1} + \cdots + \sigma_s q_{i,t_i},\quad \sigma_0,\,\ldots,\,\sigma_{t_i} \in \Sigma[x,y]  \},
	\end{gather}
\end{subequations}
where $\R[x,y,z]$ (resp. $\R[x,y]$) is the space of polynomials in variables $x,y,z$ (resp. $x,y$) and $\Sigma[x,y,z]$ (resp. $\Sigma[x,y]$) is its subset of SOS polynomials. In other words, elements of $Q(\Omega)$ are weighted sums of $r+1$ SOS polynomials with weights $1,\,p_1,\,\ldots,\,p_r$, and similarly for each $Q(\Gamma_i)$. All polynomials in $Q(\Omega)$ and $Q(\Gamma_i)$ are clearly non-negative on $\Omega$ and $\Gamma_i$, respectively, although the converse is not true in general. Then, we can replace the polynomial inequalities in~\cref{e:sos-final-with-slacks} with weighted SOS constraints to obtain
%
\vspace{-3ex}
\begin{equation}
\label{e:sos-with-slacks}
\mathbb{D} \geq \sup_{\substack{a,b,c_i \in \R\\\phi \in \R_{d}[x,y,z]\\\psi_i\in \R_{d}[x,y]\\\theta_1,\ldots,\theta_n \in \R_{d}[x,y]}} \;
\begin{aligned}
\\
\big\{
\abs{X} b + \abs{\partial X} c:\quad 
F(x,y,z) - b &\in Q(\Omega),\\[-0.25ex]
G_i(x,y) - c &\in Q(\Gamma_i),\quad i=1,\,\ldots,\,s
\big\}.
\end{aligned}
\end{equation}
It is well known that optimization problems with weighted SOS constraints can be recast into SDPs (see, e.g., section 2.4.2 in~\cite{Lasserre2015}), and can therefore be solved using a variety of algorithms with polynomial-time complexity. In addition, while the bounds obtained with~\eqref{e:sos-with-slacks} for finite $d$ are typically strictly lower than $\mathbb{D}$, they converge to $\mathbb{D}$ as $d$ is raised provided that $\overline{X}$, $\overline{Y}$ and $\overline{Z}$ satisfy suitable compactness assumptions. The next proposition makes this statement precise and---as the results in \cref{s:duality} imply---is the dual counterpart to Theorem 3 in~\cite{Korda2018} on the convergence of the moment-SDP relaxations described in the next section.

\begin{proposition}
	Suppose that $\overline{X}$, $\overline{Y}$ and $\overline{Z}$ are compact. Suppose also that there exist positive constants $C_0,\,\ldots,\,C_s$ such that $C_0 - \|x\|^2-\|y\|^2-\|z\|^2$ is in $Q(\Omega)$ and $C_i - \|x\|^2-\|y\|^2$ is in $Q(\Gamma_i)$ for each $i=1,\,\ldots,\,s$. Then, the righthand side of~\cref{e:sos-with-slacks} converges to $\mathbb{D}$ as $d \to \infty$.
\end{proposition}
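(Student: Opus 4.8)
The plan is to deduce this convergence result from Putinar's Positivstellensatz applied to the two families of constraints in \cref{e:sos-final-with-slacks}, exploiting the fact that the feasible set of that infinite-dimensional program is defined by \emph{strict-feasibility-stable} inequalities. First I would fix $\varepsilon>0$ and pick a feasible point $(a,b,c_1,\dots,c_s,\phi,\psi_1,\dots,\psi_s,\theta)$ for the program \cref{e:sos-final-with-slacks} whose objective value $\abs{X}b+\sum_i\abs{\partial X_i}c_i$ is within $\varepsilon/2$ of $\mathbb{D}$. Since $l,f,g,h_i$ are continuous and, for the SDP step, polynomial, I may also assume by a density argument (Stone--Weierstrass on the compact sets $\Omega$ and $\Gamma_i$) that $\phi,\psi_i,\theta$ are polynomials; approximating them in the sup-norm perturbs $\inf_\Omega F$ and $\inf_{\Gamma_i}G_i$ by an arbitrarily small amount, so one can choose polynomial multipliers that are still feasible for \cref{e:sos-final-with-slacks} with objective within, say, $\varepsilon$ of $\mathbb{D}$. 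The crucial observation is then that, by decreasing $b$ and each $c_i$ by a further $\delta>0$, the polynomials $F(x,y,z)-(b-\delta)$ and $G_i(x,y)-(c_i-\delta)$ become \emph{strictly positive} on $\Omega$ and $\Gamma_i$ respectively.

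Next I would invoke the Archimedean hypothesis. The assumption that $C_0-\|x\|^2-\|y\|^2-\|z\|^2\in Q(\Omega)$ makes the quadratic module $Q(\Omega)$ Archimedean, and similarly each $Q(\Gamma_i)$ is Archimedean; compactness of $\overline X,\overline Y,\overline Z$ guarantees these constants exist in principle, and they are hypothesized here to lie in the quadratic modules so that Putinar's theorem applies directly. Therefore every polynomial strictly positive on $\Omega$ lies in $Q(\Omega)$, and every polynomial strictly positive on $\Gamma_i$ lies in $Q(\Gamma_i)$. Applying this to $F-(b-\delta)$ and $G_i-(c_i-\delta)$ shows that the tuple $(a,b-\delta,c_1-\delta,\dots,c_s-\delta,\phi,\psi_1,\dots,\psi_s,\theta)$ is feasible for the SOS program \cref{e:sos-with-slacks}, provided the degree $d$ is taken large enough to accommodate the (a priori unknown but finite) degrees of the SOS certificates produced by Putinar's theorem. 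Its objective value is $\abs{X}(b-\delta)+\sum_i\abs{\partial X_i}(c_i-\delta)$, which differs from $\mathbb{D}$ by at most $\varepsilon + \delta(\abs{X}+\sum_i\abs{\partial X_i})$; choosing $\delta$ small this is at most $2\varepsilon$. Since the righthand side of \cref{e:sos-with-slacks} is non-decreasing in $d$ and bounded above by $\mathbb{D}$, it follows that it converges to $\mathbb{D}$ as $d\to\infty$.

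The main obstacle is the density/approximation step that reduces general continuous multipliers to polynomial ones while preserving feasibility of \cref{e:sos-final-with-slacks}. One must check that a uniform approximation of $\phi,\psi_i,\theta$ changes the functions $F$ and $G_i$ uniformly on the compact sets $\Omega,\Gamma_i$ — this is immediate for the terms $\phi g$, $\psi_i h_i$, and $\theta\cdot\hat n$, but the terms $\nabla_x\cdot\theta(x,u)$ and $\trace[v\,D_u\theta(x,u)]$ involve \emph{derivatives} of $\theta$, so one needs a $C^1$ (not merely $C^0$) approximation of $\theta$ by polynomials, which is still available on compact sets (e.g.\ by mollification followed by Weierstrass, or by approximating $\theta$ together with its first-order partials). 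Handling this carefully, and keeping track of the fact that the claim only asserts convergence — not attainment — so that working with $\varepsilon$-optimal rather than optimal multipliers is enough, is the essential content; everything else is a direct appeal to Putinar's Positivstellensatz.
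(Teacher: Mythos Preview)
Your proposal is correct and follows exactly the approach the paper sketches: polynomial $C^1$-approximation of the multipliers (you rightly flag that $\theta$ must be approximated together with its first derivatives), a slack $\delta$ to obtain strict positivity, and Putinar's Positivstellensatz via the Archimedean hypothesis. The paper's own proof merely names these two ingredients and leaves the details to the reader, so your write-up is in fact more complete than what appears there.
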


\begin{proof}
The proof follows a standard template in SOS optimization, which combines polynomial approximation of $C^1$ functions and their derivatives with Putinar's Positivstellensatz~\cite[Lemma~4.1]{Putinar1993} on the existence of weighted SOS representations for strictly positive polynomials on a class of compact semialgeraic sets that includes $\Omega$ and $\Gamma_1,\,\ldots,\,\Gamma_s$ (see section 2.4.1 of~\cite{Lasserre2015} for more on this result). The details are left to the interested reader.
\end{proof}

\section{Convex relaxation using measures}
\label{s:measure-method}

Korda \textit{et al.}~\cite{Korda2018} proposed a different approach to bounding $\maxL$ from below. The key idea is to relax a variational problem into an infinite-dimensional convex program over scaled probability measures. For variational problems with polynomial data, the optimization over measures can be replaced with SDPs that optimize finite sequences of their moments. Here we review this approach in the context of~\cref{e:L-star}.

\subsection{A convex program over non-negative measures}
\label{ss:measures-infinite-dimensional}

For each $u$ in the set $\mathcal{U}$ of admissible functions for~\cref{e:L-star}, consider the maps
\begin{equation}
\begin{aligned}
\zeta^u: \;X &\to \overline{X} \times \overline{Y} \times \overline{Z}\\
x  &\mapsto (x,u(x),Du(x)),
\end{aligned}
\qquad\qquad
\begin{aligned}
\chi_i^u: \,\partial X_i &\to \partial X_i \times \overline{Y}\\
x  &\mapsto (x,u(x)).
\end{aligned}
\end{equation}
Let $\mu^u:= \zeta^u\sharp\dnx$ be the pushforward by $\zeta^u$ of the Lebesgue measure on $X$. Similarly, for each $i=1,\,\ldots,\,s$ let $\nu_i^u:=\chi_i^u\sharp\dsigma$  be the pushforward by $\chi_i^u$ of the surface measure on $\partial X_i$. Following~\cite{Korda2018}, we refer to $\mu^u$ and $\nu_1^u,\,\ldots,\,\nu_s^u$ as the occupation and boundary measures of $u$. They are defined on $\Omega$ and $\Gamma_1,\,\ldots,\,\Gamma_s$, respectively, and satisfy
\begin{subequations}
	\begin{gather}
	\label{e:mu}
	\int_X \eta(x,u,Du) \,\dnx = \iiint_{\Omega} \eta(x,y,z) \,\dmu_u(x,y,z) =: \langle\eta,\mu^u\rangle,\\
	\label{e:nu}
	\int_{\partial X_i} \xi(x,u) \,\dsigma = \iint_{\Gamma_i} \xi(x,y) \,\dnu_i^u(x,y) =: \langle\xi,\nu_i^u\rangle,
	\end{gather}
\end{subequations}
whenever $\eta$ and $\xi$ are such that the lefthand sides are well defined (see, e.g.,~\cite[Theorem~3.6.1]{Bogachev2007}). In particular, for each $u \in \mathcal{U}$ we have $L[u] = \langle l,\mu^u\rangle$ and we can rewrite~\cref{e:L-star} as a minimization problem over occupation measures:
\begin{equation}
\label{e:measure-vanilla}
\maxL = \inf_{\mu^u} \langle l,\mu^u\rangle.
\end{equation}

While the objective in this problem is linear in $\mu^u$, the set of occupation measures is generally not convex and~\cref{e:measure-vanilla} is no easier than~\cref{e:L-star}.
The strategy of Korda \textit{et al.}~\cite{Korda2018} is to construct a convex set $\mathcal{M}$ of measures $(\mu,\nu_1,\ldots,\nu_s)$ that contains all occupation and boundary measures, and minimize $\langle l,\mu \rangle$ as a linear function over $\mathcal{M}$. Clearly, this is a convex program that yields a lower bound on $\maxL$.

To construct $\mathcal{M}$, note that all occupation measures $\mu^u$ and boundary measures $\nu_1^u,\,\ldots,\,\nu_s^u$ are non-negative, which we write as $\mu^u,\nu_1^u,\,\ldots,\,\nu_s^u \geq 0$. This follows immediately after fixing $\eta$ and $\xi$ in~\cref{e:mu,e:nu} to be the characteristic functions of any $E\subseteq X$ and $E\subseteq \partial X_i$, respectively. In particular, for $E=X$ and $E=\partial X_i$ we obtain
\begin{subequations}
	\label{e:measure-norms}
	\begin{gather}
	\|\mu^u\| := \langle 1,\mu^u\rangle= \abs{X},\\
	\|\nu_i^u\| := \langle 1,\nu_i^u\rangle= \abs{\partial X_i}.
	\end{gather}
\end{subequations}
This shows that occupation measures are scaled probability measures on $\Omega$ with mass $\abs{X}$, while each boundary measure $\nu_i^u$ is a scaled probability measure on $\Gamma_i$ with mass~$\abs{\partial{X}_i}$.

Additional conditions on the occupation and boundary measures can be derived from the integral, differential and boundary constraints that define the set $\mathcal{U}$ of admissible functions for~\cref{e:L-star}. Specifically, applying~\cref{e:mu} to the integral constraint $\int_X f(x,u,Du) \,\dnx = 0$ gives
\begin{equation}
\label{e:measure-integral-constraints}
\langle f, \mu^u \rangle = 0.
\end{equation}
%
In addition, multiplying the PDE constraint $g(x,u,Du)=0$ by $\phi(x,u,Du)$ for any $\phi \in C(\Omega)$, integrating the results over $X$, and using~\cref{e:mu} we obtain
\begin{equation}
\label{e:measure-differential-constraints}
\langle \phi g, \mu^u \rangle = 0 \quad \forall \phi \in C(\Omega).
\end{equation}
Similarly, for each $i=1,\,\ldots,\,s$, multiplying the boundary constraint $h_i(x,u)=0$ by $\psi_i(x,u)$ for any $\psi_i \in C(\Gamma_i)$, integrating over $\partial X_i$, and using~\cref{e:nu} we conclude that
\begin{equation}
\langle \psi_i h_i, \nu_i^u \rangle = 0 \quad \forall \psi_i \in C(\Gamma_i), \; i=1,\,\ldots,\,s.
\label{e:measure-boundary-constraints}
\end{equation}
The uncountably infinite sets of conditions~\cref{e:measure-differential-constraints,e:measure-boundary-constraints} correspond to equations (14b,c) in~\cite{Korda2018}. In fact,~\cref{e:measure-boundary-constraints} slightly generalizes (14c) in~\cite{Korda2018} because we do not sum over $i$, and we do not require that $\psi_i$ and $\psi_j$ match on $\partial X_i \cap \partial X_j$ whenever $\partial X_i$ is adjacent to $\partial X_j$.

The last set of constraints on occupation and boundary measures considered in~\cite{Korda2018} comes from the divergence theorem. Specifically, applying~\cref{e:mu,e:nu} to~\cref{e:ibp} with $v=Du$ after writing the boundary integral as the sum of integrals over $\partial X_1,\,\ldots,\,\partial X_s$ gives, with notation analogous to \cref{th:ibp},
\begin{equation}
\label{e:measure-ibp}
\langle \nabla_x \cdot \theta + \trace(z\,D_y \theta), \mu^u \rangle - 
\sum_{i=1}^{s}\langle \theta \cdot \hat{n}, \nu_i^u \rangle = 0 \quad \forall \theta \in C^1(\overline{X} \times \overline{Y};\,\R^n).
\end{equation}

Combining all the above results we conclude that the occupation and boundary measures associated to feasible $u$ for~\cref{e:L-star} belong to the set
\begin{equation}
\mathcal{M} := \{(\mu,\nu_1,\ldots,\nu_s):\;\mu,\nu_1,\ldots,\nu_s \geq 0 \text{ and satisfy (\ref{e:measure-norms}a,b)--\cref{e:measure-ibp}} \}.
\end{equation}
It is easy to verify that $\mathcal{M}$ is convex, so in general not all its elements are tuples of occupation and boundary measures.
Thus, minimizing $\langle l,\mu \rangle$ over $\mathcal{M}$ typically yields a lower bound on~$\maxL$:
\begin{equation}
\label{e:moments-final}
\maxL \geq \inf_{(\mu,\nu_1,\ldots,\nu_s) \in \mathcal{M}} \langle l,\mu \rangle =: \mathbb{P}.
\end{equation}
The minimization problem on the righthand side, which is a restatement of problem (16) from~\cite{Korda2018} in the present context, is an infinite-dimensional convex program. As for the lower bound in~\cref{e:sos-final-with-slacks}, solving this convex program is generally beyond the reach of analytical work. However, in stark constrast to~\cref{e:sos-final-with-slacks}, it does \textit{not} suffice to find a feasible point for~\cref{e:moments-final} because only the optimal value $\mathbb{P}$ is guaranteed to be a lower bound on $\maxL$. Thus, one must either construct a minimizer (or minimizing sequence), or estimate $\mathbb{P}$ from below.

\begin{remark}
	As already discussed in~\cite{Korda2018}, it is of great interest to determine conditions on the functions $f$, $g$, $h_i$ and the sets $X$, $Y$, $Z$ under which the set $\mathcal{M}$ is the closed convex hull of the set of occupation and boundary measures in the \weakstar\ topology. If this were the case, the inequality in~\cref{e:moments-final} would in fact be an equality, and the convex relaxation described in this section would be tight. We will not consider this problem in this work.
\end{remark}

\subsection{Optimizing bounds using SDPs}
\label{ss:measure-sdp}

As in section \cref{ss:sos-sdp}, let us now restrict the attention to polynomial $f$, $g$, $h_i$ and $l$, and to semialgebraic sets $X$, $Y$ and $Z$. 
In this case, the measures $\mu,\nu_1,\ldots,\nu_s$ are supported on semialgebraic sets and $\mathbb{P}$ can be estimated from below by solving finite-dimensional SDPs derived with the so-called moment-SDP relaxation procedure described in~\cite{Korda2018}. We only give a brief overview of this approach here, and refer the interested reader to section 4 in~\cite{Korda2018} for a full discussion.

Moment-SDP relaxations rely on two observations. The first is that one can relax the minimization in~\cref{e:moments-final} by imposing~\cref{e:measure-integral-constraints,e:measure-differential-constraints,e:measure-boundary-constraints,e:measure-ibp} only over finitely many choices for $\phi$, $\psi_i$ and $\theta=(\theta_1,\,\ldots,\,\theta_n)$. The resulting problem is still infinite-dimensional, because the optimization variables are non-negative measures, but has a finite number of constraints.
The second observation is that, when $f,\,g,\,h_1,\,\ldots,\,h_s$ and $l$ are polynomials, the objective $\langle l, \mu \rangle$ in~\cref{e:moments-final} is a finite linear combination of moments of $\mu$. Similarly,~\cref{e:measure-integral-constraints} and~(\ref{e:measure-norms}a,b) are linear equalities relating a finite number of moments of $\mu$ and~$\nu_1,\,\ldots,\,\nu_s$. More constraints on the moments can be obtained from~\cref{e:measure-differential-constraints}, \cref{e:measure-boundary-constraints} and~\cref{e:measure-ibp} by taking
\begin{subequations}
\label{e:test-moments}
\begin{align}
\phi &\in \{x^\alpha y^\beta z^\gamma\}_{\abs{\alpha}+\abs{\beta}+\abs{\gamma} \leq d}
\\
\psi_1,\,\ldots,\,\psi_s  &\in \{x^\alpha y^\beta\}_{\abs{\alpha}+\abs{\beta}\leq d},
\\
\theta_1,\,\ldots,\,\theta_n  &\in \{x^\alpha y^\beta\}_{\abs{\alpha}+\abs{\beta}\leq d}
\end{align}
\end{subequations}
to be monomials of total degree no larger than some integer $d$ of choice. (In these expressions we have used standard multi-index notation, e.g., $x^\alpha = x_1^{\alpha_1}\cdots x_n^{\alpha_x}$ and $\abs{\alpha} = \alpha_1 + \cdots +\alpha_n$.) Thus, the minimization in~\cref{e:moments-final} can be relaxed into a finite-dimensional minimization problem for finitely many moments of $\mu$ and $\nu_1,\,\ldots,\,\nu_s$. This, in turn, can be relaxed into an SDP because sets of truncated sequences of moments of measures with semialgebraic supports admit semidefinite-representable outer approximations; see, for instance,~\cite[section 3.2]{Korda2018} and the monographs~\cite{Laurent2009,Lasserre2015}.

\section{Duality}
\label{s:duality}

The functions $\phi$, $\psi_1,\,\ldots,\,\psi_s$ and $\theta$ used in~\cref{s:sos-method,s:measure-method} are evidently very similar. This similarity is not an artefact of our notation, but stems from the fact that the convex program over Lagrange multipliers on the righthand side of~\cref{e:sos-final} (equivalently,~\cref{e:sos-final-with-slacks}) is the Lagrangian dual of the convex program over measures on the righthand side of~\cref{e:moments-final}.  This observation is made precise by the following result, which is our main contribution.

\begin{theorem}
	\label{th:weak-duality}	
	The convex programs on the righthand sides of~\cref{e:sos-final,e:moments-final} are weakly dual and $\mathbb{D} \leq \mathbb{P}$.
	The duality is strong, meaning that $\mathbb{P} = \mathbb{D}$,
	if the sets $Y$ and $Z$ are bounded.
\end{theorem}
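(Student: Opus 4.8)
The plan is to identify the pair $\mathbb{D}$, $\mathbb{P}$ as a primal--dual pair in the sense of conic/Lagrangian duality, using the pairing between the space of finite signed measures on $\Omega$ and $\Gamma_1,\ldots,\Gamma_s$ and the space of continuous functions on those compact sets. First I would set up the primal cone program: the decision variable is a tuple $(\mu,\nu_1,\ldots,\nu_s)$ of \emph{arbitrary} finite signed measures, the cone is the product of the cones of non-negative measures, and the affine constraints are the mass normalizations (\ref{e:measure-norms}a,b), the integral constraint~\eqref{e:measure-integral-constraints}, and the families~\cref{e:measure-differential-constraints,e:measure-boundary-constraints,e:measure-ibp} (the last three being indexed by the test functions $\phi\in C(\Omega)$, $\psi_i\in C(\Gamma_i)$, $\theta\in C^1(\overline X\times\overline Y;\R^n)$). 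The objective is $\langle l,\mu\rangle$. Dualizing, the multiplier for the mass constraints is a scalar pair $(b,c_1,\ldots,c_s)$, the multiplier for~\eqref{e:measure-integral-constraints} is a scalar $a$, and the multipliers for the function-indexed families are exactly the functions $\phi$, $\psi_i$, $\theta$ reappearing as dual variables. Writing the Lagrangian and minimizing over $\mu\ge 0$ and $\nu_i\ge 0$ forces the "coefficient of $\mu$" and the "coefficient of $\nu_i$" to be non-negative as functions on $\Omega$, $\Gamma_i$ respectively; a short computation shows these coefficients are precisely $F(x,y,z)-b$ and $G_i(x,y)-c_i$ from~\cref{e:FG}, and the resulting dual objective is $|X|b+\sum_i|\partial X_i|c_i$. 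This reproduces~\cref{e:sos-final-with-slacks} verbatim, giving $\mathbb{D}\le\mathbb{P}$ by the standard weak-duality inequality.

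For weak duality one can bypass the abstract machinery entirely: take any dual-feasible $(a,b,c_i,\phi,\psi_i,\theta)$ and any primal-feasible $(\mu,\nu_1,\ldots,\nu_s)$, pair the constraint $F-b\ge0$ against $\mu\ge 0$ and $G_i-c_i\ge0$ against $\nu_i\ge0$, and use~\cref{e:measure-integral-constraints,e:measure-differential-constraints,e:measure-boundary-constraints,e:measure-ibp} and (\ref{e:measure-norms}a,b) to collapse all the extra terms, leaving $\langle l,\mu\rangle \ge |X|b+\sum_i|\partial X_i|c_i$; taking suprema and infima gives $\mathbb{P}\ge\mathbb{D}$. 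This is clean and I would present it as the proof of the first assertion.

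The substantive part is strong duality when $Y$ and $Z$ are bounded, so that $\Omega$ and every $\Gamma_i$ are compact. Here I would invoke a no-duality-gap theorem for infinite-dimensional conic programs — for instance the standard result that a primal linear program over $C(K)^*$ with a continuous linear objective has no duality gap with its dual provided the dual problem is consistent and a Slater-type or closedness condition holds (equivalently, apply a minimax theorem after observing the relevant constraint sets are \weakstar-compact). Concretely: the set of primal-feasible $(\mu,\nu_1,\ldots,\nu_s)$ is a \weakstar-closed subset of the product of balls $\{\|\mu\|=|X|\}\times\prod_i\{\|\nu_i\|=|\partial X_i|\}$ in $C(\Omega)^*\times\prod_i C(\Gamma_i)^*$, hence \weakstar-compact by Banach--Alaoglu; it is nonempty since any $u\in\mathcal U$ yields an admissible tuple of occupation/boundary measures; and $\langle l,\mu\rangle$ is \weakstar-continuous because $l\in C(\Omega)$. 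Therefore $\mathbb{P}$ is attained. Strong duality then follows from a Sion-type minimax argument applied to the Lagrangian $\mathcal{L}\big((\mu,\nu_i),(a,b,c_i,\phi,\psi_i,\theta)\big)$, which is affine (hence concave--convex) in each block, once one checks that the minimax equals the max-min; the compactness of the primal feasible set is exactly what licenses the interchange. The boundedness of $Y$ and $Z$ enters in two essential places: it makes $\Omega$, $\Gamma_i$ compact so that $C(\Omega)$, $C(\Gamma_i)$ and Banach--Alaoglu are available, and it makes the generic lower bounds $|X|\inf_\Omega F$, $|\partial X_i|\inf_{\Gamma_i}G_i$ in~\cref{e:sos-final} finite and exact rather than degenerate.

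The main obstacle I anticipate is the rigorous interchange of inf and sup, i.e. proving $\mathbb{P}\le\mathbb{D}$. The Lagrangian is unbounded in the "wrong" direction for a naive Sion application because the dual variables $\phi,\psi_i,\theta$ range over infinite-dimensional spaces with no compactness, so one cannot simply quote a symmetric minimax theorem. The correct route is the asymmetric one: exhibit the primal as a conic program whose feasible set is \weakstar-compact and whose objective is \weakstar-continuous, write its value as an infimum of a \weakstar-lower-semicontinuous convex function, and show the dual value equals it by a Hahn--Banach separation argument — separate the point $(\mathbb{P}-\varepsilon,\text{data of the affine constraints})$ from the convex cone generated by the objective and constraints in $\R\times C(\Omega)\times\prod_i C(\Gamma_i)\times(\text{constraint data space})$, using that this cone is closed precisely because $Y,Z$ bounded forces the relevant sets to be compact. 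Establishing that closedness (equivalently, that the value function is lower semicontinuous at the origin) is the technical heart, and is where I would spend the most care; everything else is bookkeeping with the pairing identities already recorded in~\cref{e:FG} and~\cref{e:measure-norms,e:measure-integral-constraints,e:measure-differential-constraints,e:measure-boundary-constraints,e:measure-ibp}.
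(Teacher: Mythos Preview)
Your weak-duality argument is essentially the paper's: introduce Lagrange multipliers for~\cref{e:measure-integral-constraints}--\cref{e:measure-ibp}, write $\mathbb{P}$ as an inf--sup, swap to a sup--inf at the cost of an inequality, and evaluate the inner infimum over scaled probability measures via sequences of scaled Dirac masses. Your direct pairing version is a clean equivalent.

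For strong duality your first instinct---apply a Sion-type minimax theorem to the Lagrangian, using \weakstar\ compactness on the measure side when $\Omega$ and the $\Gamma_i$ are compact---is exactly what the paper does, and it already suffices. Your subsequent worry that ``one cannot simply quote a symmetric minimax theorem'' because the dual variables $(a,\phi,\psi_i,\theta)$ lack compactness is misplaced: Sion's theorem~\cite[Theorem~3.3]{Sion1958} is asymmetric and requires compactness on only \emph{one} side. The paper takes
\[
N=\big\{(\mu,\nu_1,\ldots,\nu_s):\ \mu,\nu_i\ge 0,\ \|\mu\|=|X|,\ \|\nu_i\|=|\partial X_i|\big\},
\]
a product of spaces of scaled probability measures on the compact sets $\Omega,\Gamma_1,\ldots,\Gamma_s$ (hence \weakstar\ compact), and $M=\R\times C(\Omega)\times\prod_i C(\Gamma_i)\times C^1(\overline X\times\overline Y;\R^n)$ (merely convex, with the product norm topology), verifies that the Lagrangian $\langle F,\mu\rangle+\sum_i\langle G_i,\nu_i\rangle$ is linear and continuous in each block, and concludes immediately. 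Note in particular that the inner infimum is over $N$, not over the full feasible set $\mathcal{M}$ you describe; the constraints~\cref{e:measure-integral-constraints}--\cref{e:measure-ibp} have already been absorbed into the sup, so there is no need to argue that $\mathcal{M}$ itself is \weakstar\ closed. Your alternative Hahn--Banach/cone-closedness route could presumably be pushed through, but it is a substantial detour compared to the one-step application of Sion that the paper uses.
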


\begin{proof}
	To prove weak duality, recall the definitions of $F$ and $G$ from~(\ref{e:FG}a,b) and note that
	\vspace*{-35pt}
	\begin{align}
	\label{e:duality-proof}
		\inf_{(\mu,\nu_1,\ldots,\nu_s) \in \mathcal{M}} \langle l,\mu \rangle 
		&=
		\inf_{\substack{\mu,\nu_1,\ldots,\nu_s\geq 0\\\text{s.t. (\ref{e:measure-norms}a,b)}}}
		\sup_{\substack{a\in\R\\\phi\in C(\Omega)\\\psi_i \in C(\Gamma_i)\\\theta \in C^1(\overline{X} \times \overline{Y};\,\R^n)}}\!\!\!\!\!\!
		\begin{aligned}
		\\[25pt]
		\bigg\{ \langle \underbrace{l + af + \phi g + \nabla_x \cdot \theta + \trace(z\,D_y \theta)}_{F}, &\mu \rangle
		\\[-1.25ex]
		+ \sum_{i=1}^s \langle \underbrace{\psi_i h_i - \theta \cdot \hat{n}}_{G_i}&, \nu_i \rangle
		\bigg\}
		\end{aligned}
		\\
		& \geq
		\sup_{\substack{a\in\R\\\phi\in C(\Omega)\\\psi_i \in C(\Gamma_i)\\\theta \in C^1(\overline{X} \times \overline{Y};\,\R^n)}}
		\inf_{\substack{\mu,\nu_1,\ldots,\nu_s\geq 0\\\text{s.t. (\ref{e:measure-norms}a,b)}}}
		\bigg\{
		\langle F, \mu \rangle 
		+ \sum_{i=1}^s \langle G_i, \nu_i \rangle
		\bigg\}
		\nonumber \\
		&=
		\sup_{\substack{a\in\R\\\phi\in C(\Omega)\\\psi_i \in C(\Gamma_i)\\\theta \in C^1(\overline{X} \times \overline{Y};\,\R^n)}}
		\!\!\!\!\!\!\bigg\{ 
		\abs{X}\inf_{(x,y,z)\in \Omega} F(x,y,z)  
		+ \sum_{i=1}^s\abs{\partial X_i} \inf_{(x,y)\in \Gamma_i} G_i(x,y)
		\bigg\}.
		\nonumber
	\end{align}
	The last equality follows after observing that if $\{(x_j,y_j,z_j)\}_{j\geq 1} \subset \Omega$ and $\{(x_{ij},y_{ij})\}_{j\geq 1} \subset \Gamma_i$ are minimizing sequences for $F$ and $G_i$, then the collections of scaled Dirac measures 
	\begin{equation}
	(\mu,\nu_1,\ldots,\nu_s)_j := \left( 
		\abs{X}\delta_{(x_j,y_j,z_j)},\,
		\abs{\partial X_1}\delta_{(x_{1j},y_{1j})},\,
		\ldots,\,
		\abs{\partial X_s}\delta_{(x_{sj},y_{sj})}
	\right)
	\end{equation}
	form a minimizing sequence for the inner infimum on the second line.
	
	To prove strong duality when $Y$ and $Z$ are bounded observe that $\overline{Y}$ and $\overline{Z}$, hence $\Omega=\overline{X}\times\overline{Y}\times\overline{Z}$ and $\Gamma_i=\partial{X}_i\times\overline{Y}$, are compact. Then, an abstract minimax theorem due to Sion~\cite[Theorem 3.3]{Sion1958} guarantees that equality is preserved when exchanging the inf and the sup in the second line of~\cref{e:duality-proof}. To verify that the hypotheses of Sion's theorem hold in our case, observe that the tuple $(a,\phi,\psi_1,\ldots,\psi_s,\theta)$ belongs to the product space 
	\begin{equation}
	M := \R\times C(\Omega) \times C(\Gamma_1) \times \cdots \times C(\Gamma_s) \times C^1(\overline{X} \times \overline{Y};\,\R^n).
	\end{equation}
	We consider $M$ as a linear (hence, convex) space with the product topology generated by the usual norm topologies on $\R$, $C(\Omega)$, $C(\Gamma_i)$ and $C^1(\overline{X} \times \overline{Y};\,\R^n)$. 
	Moreover, the space
	\begin{equation}
	N=\{(\mu,\nu_1,\ldots,\nu_s):\;\mu,\nu_1,\ldots,\nu_s \geq 0 \text{ subject to (\ref{e:measure-norms}a,b)}\},
	\end{equation}
	is the product of spaces of scaled probability measures on the compact sets $\Omega,\,\Gamma_1,\,\ldots,\,\Gamma_s$ with mass $\abs{X},\,\abs{\partial X_1},\,\ldots,\,\abs{\partial X_s}$, respectively. We consider $N$ in the product \weakstar\ topology, so it is a compact linear (hence, convex) space. Finally, the function
	\begin{equation}
	(a,\phi,\psi_1,\ldots,\psi_s,\theta) \mapsto \langle F, \mu \rangle + \sum_{i=1}^s\langle G_i, \nu_i \rangle
	\end{equation}
	is linear and continuous (hence, quasiconcave and upper semicontinuous) on $M$ for each $(\mu,\nu_1,\ldots,\nu_s) \in N$. Conversely, for each $(a,\phi,\psi_1,\ldots,\psi_s,\theta) \in M$ the function
	\begin{equation}
	(\mu,\nu_1,\ldots,\nu_s) \mapsto \langle F, \mu \rangle + \sum_{i=1}^s\langle G_i, \nu_i \rangle
	\end{equation}
	is linear and continuous (hence, quasiconvex and lower semicontinuous) on $N$.
\end{proof}

In light of the well known duality between the cones of weighted SOS polynomials and moment sequences~\cite{Laurent2009,Lasserre2015}, \cref{th:weak-duality} implies that the finite-dimensional SDP relaxations briefly described in \cref{ss:sos-sdp,ss:measure-sdp} are also weakly dual when the same value of $d$ is taken in~\cref{e:sos-with-slacks} and~(\ref{e:test-moments}a,b,c). Strong duality at the level of SDPs holds under general constraint qualification conditions, such as Slater's condition~\cite[section 5.9.2]{Boyd2004}, which are often satisfied in practice and can be verified independently of whether $\mathbb{P}=\mathbb{D}$. Moreover, strong duality is needed to guarantee convergence and good numerical performance of many primal-dual algorithms for semidefinite programming that solve the Karush--Kuhn--Tucker (KKT) optimality conditions. Since many commonly used SDP solvers implement such algorithms, from the point of view of numerical computations the two approaches to bounding $\maxL$ described in \cref{s:sos-method,s:measure-method} are equivalent.

\section{Further comments}
\label{s:conclusion}

In this work we have demonstrated the duality between the two convex relaxation methods proposed by~\cites{Valmorbida2014,Valmorbida2015a,Valmorbida2015b,Valmorbida2015c,Ahmadi2014,Ahmadi2015,Ahmadi2016,Ahmadi2017,Ahmadi2018} and~\cite{Korda2018} to bound from below the optimal value $\maxL$ of a constrained variational problem. Precisely, the convex program over scaled probability measures formulated in~\cite{Korda2018} is weakly dual to the infinite-dimensional convex program over Lagrange multipliers that we have derived using ideas from~\cites{Valmorbida2014,Valmorbida2015a,Valmorbida2015b,Valmorbida2015c,Ahmadi2014,Ahmadi2015,Ahmadi2016,Ahmadi2017,Ahmadi2018}. Furthermore, we have proven that the duality is strong when the range and gradients of admissible function in the original variational problem are constrained to bounded sets. 

For problems with polynomial data, the hierarchies of SDPs obtained with the SOS and moment-SDP relaxations briefly described in \cref{ss:sos-sdp,ss:measure-sdp} are also dual. Thus, they can be interpreted as extensions to polynomial variational problems of the well known dual hierarchies of SOS and moment-SDP relaxations for standard polynomial optimization problems. In contrast to standard polynomial optimization problems, however, there is currently no guarantee that the lower bounds on $\maxL$ computed using these hierarchies can be made arbitrarily sharp. Numerical experiments from~\cites{Valmorbida2014,Valmorbida2015a,Valmorbida2015b,Valmorbida2015c,Ahmadi2014,Ahmadi2015,Ahmadi2016,Ahmadi2017,Ahmadi2018} and~\cite{Korda2018} suggest that sharp bounds are sometimes possible, but the issue should be investigated in more depth both theoretically and computationally.

All our results can be extended to variational problems with more general choices for the set $\mathcal{U}$ in~\cref{e:U-def}. For instance, the extension to problems with multiple PDE, boundary, and integral constraints is immediate. In addition, one can replace the Sobolev space $W^{1,\infty}(X;\,\R^m)$ with $W^{1,p}(X;\,\R^m)$, $1\leq p < \infty$, provided suitable conditions are imposed on the problem data $f,g,h_i,l$ and the multipliers $\phi,\psi_i,\theta$ to ensure that all integrals being considered are well defined. For instance, if $p=3$ and $g$ is quadratic in $u$ and $Du$, then $\phi$ should grow no faster than a linear function of $u$ and $Du$, so $\int_X \phi(x,u,Du) g(x,u,Du) \dnx$ is well defined. Finally, following~\cite{Korda2018} it is not difficult to adapt our discussion to variational problems with second-order semilinear PDE constraints and whose objective includes an integral over the boundary. 

In the latter case, however, it does not seem possible to formulate finite-dimensional SDPs unless the unit vector normal to each smooth part of the boundary is either a polynomial, or a rational function with nonvanishing denominator. Fortunately, this is true for many domains encountered in applications, including polyhedral domains. To see the source of the difficulty, observe that when
\begin{equation}
L[u] = \int_X l(x,u,Du) \dnx + \int_{\partial X} l_{\rm b}(x,u) \,\dsigma
\end{equation}
one only needs to redefine each $G_i$ in~\cref{e:inf-sup-sos-3,e:sos-final,e:sos-final-with-slacks} as
\begin{equation}
G_i(x,u) := l_{\rm b}(x,u) + \psi_i(x,u)h_i(x,u) - \theta \cdot \hat{n}(x).
\end{equation}
If $l_{\rm b}$ is a given (nonzero) polynomial, then any nonpolynomial dependence of $\hat{n}$ cannot be absorbed by a judicious choice of $\psi_i$ as we have done in \cref{app:nonpolynomial-normal} for the case $l_{\rm b}=0$. The only tractable situation is when $\hat{n}$ is a rational function with nonvanishing (hence, sign-definite) denominator. In this case, each inequality $G_i(x,u) - c_i \geq 0$ can be strengthened into a weighted SOS constraint after multiplying through by the denominator of $\hat{n}$. Similar considerations hold for the measure-theoretic approach of~\cite{Korda2018} and \cref{s:measure-method}.

Finally, we remark that although the convex relaxation methods of \cref{s:sos-method,s:measure-method} are essentially equivalent for a large class of variational problems, each approach has unique advantages. We have already mentioned that if analytical bounds on $\maxL$ are of interest, then it is more convenient to work with~\eqref{e:sos-final} because any suboptimal choice of Lagrange multipliers produces a valid bound, whereas the convex program over measures in~\cref{e:moments-final} must be solved exactly. On the other hand, studying the relation between the feasible set $\mathcal{M}$ of~\cref{e:moments-final} and the set of occupation and boundary measures may help to identify conditions under which arbitrarily sharp bounds on $\maxL$ are possible. Moreover, for  variational problems more general than~\cref{e:L-star} the formulation of a convex relaxation may be easier if Lagrange multipliers are used instead of measures, or viceversa. The former case includes constructing Lyapunov-like functionals for dynamical systems governed by PDEs, as originally done in~\cites{Valmorbida2014,Valmorbida2015a,Valmorbida2015b,Valmorbida2015c,Ahmadi2014,Ahmadi2015,Ahmadi2016,Ahmadi2017,Ahmadi2018}, as well as parametric problems of the form
\begin{equation}
\adjustlimits \sup_{p \in \mathcal{P}} \inf_{u \in \mathcal{U}(p)} \int_X l(x,u,Du,p) \,\dnx
\end{equation}
where $\mathcal{P}$ is a convex set of parameters and the dependence of $l$ and $\mathcal{U}$ on $p$ is affine. Instead, it is simpler to use occupation and boundary measures when the objective $L[u]$ in~\cref{e:L-star} is not an integral functional, but can still be expressed as a function of finitely many moments that is either convex, or can be bounded from below using convex conditions. One such example is when $L[u]$ is a polynomial of integral functionals. All these extensions to the methods described in this work should be explored further in the future.


\appendix
\section{SDPs with nonpolynomial unit normal vector}
\label{app:nonpolynomial-normal}

When the domain $X$ is a semialgebraic set, the smooth portions of its boundary $\partial X_1,\,\ldots,\,\partial X_s$ are level sets of polynomials. Suppose that each $\partial X_i$ is defined by the equation $S_i(x)=0$, where $S_i$ is a polynomial such that $\|\nabla S_i\| \neq 0$ on $\partial X_i$. Then, the unit normal vector to $\partial X_i$ is given by
\begin{equation}
\hat{n}(x)= \frac{\nabla S_i(x)}{\|\nabla S_i(x)\|}.
\end{equation} 

When $\|\nabla S_i\|$ is not constant but polynomial,~\cref{e:sos-final-with-slacks} can be relaxed into an SDP as described in \cref{ss:sos-sdp} after multiplying the inequality $G_i(x,y)-c_i \geq 0$ by $\|\nabla S_i\|$. 

When $\|\nabla S_i\|$ is not polynomial, one can still formulate and SDP if~\cref{e:sos-final-with-slacks} is modified as follows. Recall that the multiplier $\psi_i$, on which the function $G_i(x,u)$ in~\cref{e:inf-sup-sos-3} depends, is arbitrary. Consequently, without loss of generality we can write 
\begin{equation}
\psi_i(x,u) = \frac{\hat{\psi}_i(x,u)}{\|\nabla S_i(x)\|}
\end{equation}
for some continuous function $\hat{\psi}_i(x,u)$. Then,~\cref{e:inf-sup-sos-3} can be rewritten as
\begin{equation}
\maxL =
\inf_{\substack{u \in W^{1,\infty}(X;\,\R^m)\\u(x) \in Y \text{ a.e.}\\v(x) \in Z \text{ a.e.}}}
\sup_{\substack{\alpha\in\R\\\phi\in C(\Omega)\\\hat{\psi}_i \in C(\Gamma_i)\\\theta \in C^1(\overline{X} \times \overline{Y};\,\R^n)}}
\left\{ \int_X F(x,u,v) \,\dnx + \sum_{i=1}^{s}\int_{\partial X_i} \frac{\hat{G}_i(x,u)}{\|\nabla S_i\|} \,\dsigma \right\},
\end{equation}
where $F$ is as in~\cref{e:F-def} and
$\hat{G}_i(x,u) := \hat{\psi}_i(x,u) h_i(x,u) - \theta(x,u) \cdot \nabla S_i(x)$.
%
Upon estimating the integrals we can replace~\cref{e:sos-final} with
\begin{equation}
\maxL \geq
\sup_{\substack{\alpha\in\R\\\phi\in C(\Omega)\\\hat{\psi}_i \in C(\Gamma_i)\\\theta \in C^1(\overline{X} \times \overline{Y};\,\R^n)}}
\left\{ \abs{X} \inf_{(x,y,z) \in \Omega} F(x,u,v) + \sum_{i=1}^{s} \inf_{(x,y)\in\Gamma_i}\hat{G}_i(x,y) \int_{\partial X_i} \frac{\dsigma}{\|\nabla S_i\|}\right\} =:\hat{\mathbb{D}},
\end{equation}
and~\cref{e:sos-final-with-slacks} with
\vspace*{-3ex}
\begin{equation}
\label{e:modified-sos-nonpolynomial-normal}
\hat{\mathbb{D}} =\!\!\!\!
\sup_{\substack{a,b,c_i\in\R\\\phi\in C(\Omega)\\\hat{\psi}_i \in C(\Gamma_i)\\\theta \in C^1(\overline{X} \times \overline{Y};\,\R^n)}}
\hspace{-15pt}\begin{aligned}
\\
\bigg\{
\abs{X} b + \sum_{i=1}^{s} \int_{\partial X_i} \frac{\dsigma}{\|\nabla S_i\|}c_i:\; 
F(x,y,z) - b &\geq 0 \text{ on } \Omega,\\[-3ex]
\hat{G}_i(x,y) - c_i &\geq 0 \text{ on } \Gamma_i, \;i=1,\,\ldots,\,s
\bigg\}.
\end{aligned}
\end{equation}
The constraints on the righthand side are polynomial inequalities if the problem data and $\phi$, $\hat{\psi}_i$, $\theta$ are polynomials, and can be strengthened into weighted SOS conditions as outlined in \cref{ss:sos-sdp}. Then, lower bounds on $\hat{\mathbb{D}}$ (hence, $\maxL$) can be computed by solving an SDP provided that all integrals $\int_{\partial X_i} \frac{\dsigma}{\|\nabla S_i\|}$ can be computed analytically or approximated numerically.

%

\originalsectionstyle
\bibliography{./reflist.bib}

\end{document}